\documentclass[10pt,conference, onecolumn, letterpaper,compsoc]{IEEEtran}

\setlength{\oddsidemargin}{1.5cm}			
\setlength{\evensidemargin}{1.5cm}			
\setlength{\textwidth}{13.2cm}  					%
\setlength{\textheight}{21.3cm} 					%
\setlength{\topmargin}{-0.5cm}						%

\usepackage[centertags]{amsmath}
\usepackage[english]{babel}
\usepackage{amsfonts,amsthm}
\usepackage{graphicx}
\usepackage{ifthen}
\usepackage{algorithm}
\usepackage{algorithmic}
\usepackage{multirow}
\usepackage{changepage}
\usepackage{caption}
\usepackage{pgfplots}
\pgfplotsset{compat=newest}
\def\({\left(}
\def\){\right)}

\newtheorem{problem}{Problem}
\newcommand{\blambda}{\mbox{\boldmath$\lambda$\unboldmath}}

\newcommand{\R}{\mathbb{R}}
\newcommand{\Q}{\mathbb{Q}}
\newcommand{\Z}{\mathbb{Z}}
\newcommand{\V}{\textbf{V}}

\newcommand{\modulo}[2]{\left\langle #1\right\rangle_{#2}}

\def\vol{\mathrm{Vol}}

\newcommand{\cL}{\mathcal{L}}
\renewcommand{\Z}{\mathbb{Z}}
\renewcommand{\vec}[1]{\textbf{#1}}
\newcommand{\mat}[1]{\textbf{#1}}

\newtheorem{propo}{Proposition}

\newtheorem{Lm}{Lemma}

\usepackage{url}

\begin{document}
\title{The MMO Problem}
\author{\IEEEauthorblockN{Oscar Garc\'{\i}a-Morch\'on\\
Ronald Rietman\\
 Ludo Tolhuizen}
\IEEEauthorblockA{Philips Research\\ Eindhoven, The Netherlands}
\and
\IEEEauthorblockN{Domingo G\'omez\\
Jaime Guti\'errez}
\IEEEauthorblockA{Universidad de Cantabria\\ Santander, Spain}
}

\maketitle

\begin{abstract}
We consider a two polynomials  analogue of the polynomial interpolation problem. 
Namely, we consider the Mixing Modular Operations (MMO)  problem of recovering
two polynomials $f\in \Z_p[x]$ and $g\in \Z_q[x]$ of known  degree, where $p$
and $q$ are two (un)known positive integers, from the values of $f(t)\bmod p +
g(t)\bmod q$  at polynomially many points $t \in \Z$.  
We  show that if $p$ and $q$ are known, the MMO problem is equivalent to
computing  a close vector in a lattice with respect to  the infinity norm.  We
also  implemented in the SAGE system a heuristic polynomial-time  algorithm.  
If $p$ and $q$ are kept secret, we do not know how to solve this problem.  This
problem is motivated by several potential cryptographic applications.
 \end{abstract}

\section{Introduction}

For integer $x$ and integer $p\geq 2$, we denote by $\langle x\rangle_p$ the remainder of dividing $x$ by $p$. 
Stated differently, 
\[ 0 \leq \langle x\rangle_p \leq p-1 \mbox{ and } x \equiv \langle x \rangle_p \bmod p .  \]
The set $\{0,1,\ldots,p-1\}$ can be identified with $\Z_p$, the ring of integers modulo $p$. 
Conversely, $\Z_p$ can be considered as a subset of $\Z$. 
This allows us to interpret functions on $\Z_p$ as polynomials evaluated modulo~$p$ on the set $\{0,1,\ldots,p-1\}$ and to extend the domain of these polynomials to $\Z$. Furthermore it allows us to add polynomials over several different rings $\Z_{p}, \Z_{q},\ldots$ for different values of the moduli $p,q,\ldots$.
This addition we denote by the term Mixing of Modular Operations.

Here we study a variant of the very well known polynomial interpolation problem,  where the function to be interpolated is the sum of two polynomials
reduced modulo two different unknown numbers $p$ and $q$.

\begin{problem}
  \label{prob:MMO_basic_secret}
 Let  $p \not= q$ be two  positive unknown integers and $c$ another positive integer.  
Let the function $h:\Z\to\Z$ be the sum of two unknown reduced polynomials
$h(x)=\langle f(x)\rangle_p + \langle g(x)\rangle_q$ for some polynomials
$f\in\Z_p[x]$, $g\in\Z_q[x]$ of degree at most $\alpha$, where $\alpha$ is
known. Suppose that the set  
  \begin{equation*}
    J = \{(x_1,h(x_1)),\ldots, (x_c,h(x_c))\}
  \end{equation*}
is  known, where $x_i \in \Z$, for $i=1,\ldots,c$.  
The \textbf{MMO problem} is to recover $p$ and $q$ and
the polynomials $f$ and $g$.
\end{problem}

This problem seems to be difficult to solve even for very small polynomial degrees and, in fact, we and other colleagues have not managed.
For the single-polynomial analogue of Problem~\ref{prob:MMO_basic_secret}, we refer to
related work in \cite{Boyar}.
The main motivation to study this computational problem arises in 
potential applications  to cryptography \cite{MorchonTGG12}.
Since we could not obtain a solution for the above problem, this paper mainly devotes its attention to a simplified problem statement in which $p$ and $q$ are known.

\begin{problem}
  \label{prob:MMO_basic}
 Let  $p \not= q$ be two  known
positive integers and $c$ another positive integer.  
Let the function $h:\Z\to\Z$ be the sum of two unknown reduced polynomials
$h(x)=\langle f(x)\rangle_p + \langle g(x)\rangle_q$ for some polynomials
$f\in\Z_p[x]$, $g\in\Z_q[x]$ of degree at most $\alpha$. Suppose that the set  
  \begin{equation*}
    J = \{(x_1,h(x_1)),\ldots, (x_c,h(x_c))\}
  \end{equation*}
is  known, where $x_i \in \Z$, for $i=1,\ldots,c$.  
The  \textbf{MMO problem with known moduli} is to recover the polynomials $f$
and $g$.
\end{problem}

This is a natural extension of the well known  polynomial interpolation problem. 
Our results show that if $c$ is big enough compared to $\alpha$, and the points $x_1,\ldots,x_c$ are randomly drawn from a large enough interval, the MMO problem has a unique solution $f$, $g$, up to an additive constant.

The paper is organized as follows: 
Section 2 gives the equivalence of  the MMO problem to finding all points in a
lattice
 of dimension 
$c+2\alpha$ that are close to a target vector with respect to the
infinity norm.   
Section 3 shows the performance of a Sage implementation of the
provided  heuristic algorithm.  
In Section 4, we consider the MMO problem for the case that all arguments $x_i$ lie in a short interval.
Section 5 concludes this paper.
  
\section{ A general approach}

\subsection{Preliminaries}
This section is devoted to the preliminaries needed to understand the
results in the paper. Our purpose is not to give a deep treatment of
lattices because these are used in this article only as technical tools.
For a nice overview from a cryptographic perspective, we recommend the
reader~\cite{micciancio-goldwasser}. If the reader interests are
nearer to the area of number theory, we recommend~\cite{Gruber}.

Let $\{{\vec{a}}_1,\ldots,{\vec{a}}_d\}$ be a set of linearly
independent row vectors in ${\R}^s$. The set
$$
{\cal L}=\{\vec{z}  \ : \ \vec{z}=c_1\vec{a}_1+\ldots+
c_d\vec{a}_d,\quad c_1, \ldots, c_d\in\Z\}
$$
is called an {\it $d$-dimensional  lattice\/} with  {\it basis\/}
$\{ {\vec{a}}_1,\ldots, {\vec{a}}_d\}$.

To each lattice ${\cal L}$ one can naturally associate its {\it
volume\/}
$$
\vol({\cal L}) = \(\det \(B B^t\)\)^{1/2},
$$
where $B\in \mathbb{R}^{d\times s}$ is the matrix with rows $\vec{a}_1,\ldots,
\vec{a}_d$. The lattice volume is invariant under unimodular transformations
of  the   basis $\{{\vec{a}}_1,\ldots,{\vec{a}}_d\}$.

For a vector $\vec{u}$, let $\|\vec{u}\|_\infty$ denote its  {\it
infinity norm\/} and  by  $\|\vec{u}\|_2$  its  {\it
Euclidean norm\/}.   It is well known that:
$$\|\vec{u}\|_\infty \leq  \|\vec{u}\|_2 \leq  \sqrt s \|\vec{u}\|_\infty .$$
Any basis of a lattice satisfies 
$$\vol({\cL})\le \prod_{i=1}^{d}\|\vec{a}_i\|_2.$$

The famous Minkowski theorem  (see~\cite[Theorem 5.3.6, page
141]{GrLoSch}) gives an upper bound on $s_\infty(\cL)$, the length in
infinity-norm of
 the shortest nonzero vector in any $d$-dimensional lattice
${\cal L}$, in terms of its volume:
\begin{equation}
\label{MinkB} s_\infty(\cL) =
\min \left\{ \|\vec{z}\|_\infty  \colon \ \vec{z}  \in
{\cL} \setminus \{\vec{0}\}\right\} \le  \vol({\cL})^{1/d}
\end{equation}

Denote the number of points of a $d$-dimensional lattice in $\R^d$ that lie in
a measurable subset $S$ of $\R^d$ by $N_\cL(S)$. Let $C(\cL)$ be a fundamental
cell of $\cL$, with volume $\vol(\cL)$, The mean number of
lattice points in the shifted set $\vec{x}+S$, where $\vec{x}\in C(\cL)$,
is given by
\[ \frac{1}{\vol(\cL)}\int_{C(\cL)} N_\cL(\vec{x}+S)\,d^dx =
\frac{\vol(S)}{\vol(\cL)}.\]A similar result appears in~\cite[Lemma 2, page 27]{BookLLL}, where
the number of lattice points inside a $d$-dimensional ball of radius $r$ is
approximated by the volume of the ball divided by the volume of the
lattice.

As in~\cite[Definition 8, page 27]{BookLLL}, the \emph{Gaussian
  heuristic}  is to neglect the averaging, and estimate the
number of lattice points in $S$ as
\[ N_\cL(S) \approx \frac{\vol(S)}{\vol(\cL)}. \]
Take $S$ to be a $d$-dimensional hypercube  of length $2L$, parallel to the
coordinate axes and
centered around a lattice point. For
$S$ to contain one lattice point, $L$ must be less than $s_\infty(\cL)$.
The Gaussian heuristic thus suggests that $(2 s_\infty(\cL) )^d > \vol(\cL)$,
giving a lower bound
\[ s_\infty(\cL) > \frac{1}{2} \( \vol(\cL) \)^{1/d},\]
which is precisely half as big as the rigorous upper bound given by the
Minkowski theorem.

Finding the shortest vector in the lattice is a difficult
task. Indeed, finding the shortest vector of a lattice for the
infinity norm is $NP-$hard. Fortunately, after the breakthrough
in~\cite{LLL82}, it is possible to find ``short'' vectors in a
lattice, thanks to the concept of \textit{$LLL$-reduced basis}.
For the $LLL-$reduced basis $\vec{a}_1,\ldots,\vec{a}_d$ and its
Gram-Schmidt orthogonalization $\vec{a}^*_1,\ldots,\vec{a}^*_d$ there exist
real numbers $\mu_{ij}$ for $1\leq j \leq i\leq d$ such that
\begin{eqnarray*}
  &|\mu_{ij}|\le 1/2, \text{ for } 1\le j<i\le d,\\
  &\|\vec{a}_i^*+\mu_{ii-1}\vec{a}_{i-1}^*\|_2^2\le \epsilon
  \|\vec{a}_{i-1}^*\|_2^2,   \text{ for } i= 1\ldots, d-1. 
\end{eqnarray*}
for some $\epsilon \in (1/4, 1)$. 

  Finally, we introduce the following notation. For each real $x$, we denote by $\lfloor x \rfloor$  the value of $x$ rounded downwards to the
closest integer, that is,
\[ \lfloor x \rfloor = \max \{ m\in\Z \mid m\leq x\}.
\]

\begin{Lm}
\label{bound_infinity} For any integer $x$ and any integer $p>1$, we have:
\begin{itemize}
\item  $\modulo{x}{p}= x- \lfloor x/p\rfloor p$
\item There is a unique  integer 
$\lambda$ such that $| 2x - 2p\lambda - (p-1)| < p $. For this integer it holds
that $\lambda=  \lfloor x/p\rfloor$.
\end{itemize}
\end{Lm}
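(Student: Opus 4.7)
The plan is to handle the two bullet points in turn, with part one reducing to the division algorithm and part two being a routine rearrangement of the inequality once one notices that $x - p\lambda$ is forced to be an integer in $\{0,1,\ldots,p-1\}$.

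For the first bullet, I would invoke the division algorithm: write $x = qp + r$ with $q\in\Z$ and $0\le r \le p-1$. The uniqueness of this decomposition together with the inequalities $0 \le r \le p-1$ (i.e.\ $0 \le x - qp < p$, i.e.\ $q \le x/p < q+1$) identifies $q$ as $\lfloor x/p \rfloor$, and $r$ as $\langle x\rangle_p$ by the definition given at the start of the introduction. Hence $\langle x \rangle_p = x - \lfloor x/p\rfloor p$.

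For the second bullet, I would simply unfold the inequality $|2x - 2p\lambda - (p-1)| < p$. It is equivalent to
\[
  -p < 2x - 2p\lambda - (p-1) < p,
\]
which after adding $p-1$ and dividing by $2$ becomes
\[
  -\tfrac{1}{2} < x - p\lambda < p - \tfrac{1}{2}.
\]
Since $x - p\lambda$ is an integer, this is equivalent to $0 \le x - p\lambda \le p-1$. By the division algorithm (or equivalently by the first bullet, applied in reverse), there is exactly one integer $\lambda$ with this property, namely $\lambda = \lfloor x/p\rfloor$, and for that $\lambda$ one has $x - p\lambda = \langle x\rangle_p$.

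There is no real obstacle here; the only tiny subtlety is the appeal to integrality to convert the strict inequality $-\tfrac12 < x - p\lambda$ into $0 \le x - p\lambda$, and similarly for the upper bound. Uniqueness then follows because the set $\{\,2x - 2p\lambda - (p-1) : \lambda \in \Z\,\}$ is a coset of $2p\Z$, and the open interval $(-p,p)$ has length exactly $2p$, so it contains precisely one representative.
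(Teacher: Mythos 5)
Your proof is correct. The paper states this lemma without any proof at all (it is treated as an immediate consequence of the division algorithm), so your write-up simply supplies the routine argument the authors omitted: the first bullet is the division algorithm plus the observation that the quotient is $\lfloor x/p\rfloor$, and the second bullet unfolds the absolute-value inequality to $-\tfrac12 < x-p\lambda < p-\tfrac12$ and uses integrality to land back on $0\le x-p\lambda\le p-1$. One tiny nitpick on your closing remark: an \emph{open} interval of length $2p$ contains exactly one representative of a coset of $2p\Z$ only when the endpoints themselves are not in the coset; here that is guaranteed because $2x-2p\lambda-(p-1)=\pm p$ would force $2x$ to be odd modulo $2p$, which is impossible. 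Since your main argument already establishes uniqueness via the division algorithm, this looseness is harmless.
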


Similarly, for an integer vector $\vec{x}=(x_1,\ldots,x_d)$, $\lfloor
\vec{x}/p \rfloor$ is equal to the unique integer vector $\blambda=(\lambda_1,\ldots, \lambda_d)$
such that for each component it holds that 
$| 2x_k - 2p\lambda_k - (p-1)| < p$. 
If $\vec{e}_{d}$ is the vector of length
$d$ with all components equal to $1$, the latter 
condition is equivalent to $\|2\vec{x} - 2p\blambda -
(p-1)\vec{e}_d\|_{\infty} < p$.

\subsection{Lattice reduction}

The next proposition shows that from the values of $h(x)=\langle f(x) \rangle_p + \langle g(x)\rangle_q$ in all
integers $x$, the polynomials $f\in\Z_p[x]$ and $g\in\Z_q[x]$ are determined uniquely up to constant.
\begin{propo}
Let $p$ and $q$ be two positive integers that are relatively prime.
Let $f,g,u,v$ be functions from $\Z$ to $\Z$ such that for each integer $x$, 
\[ \langle f(x) \rangle_p + \langle g(x) \rangle_q = \langle u(x) \rangle_p + \langle v(x)\rangle_q. \]
There exists an integer $C$ such that for each integer
$x$, we have that
\[ \langle u(x)\rangle_p = \langle f(x) \rangle_p +C \mbox{  and }
  \langle v(x) \rangle_q  = \langle g(x) \rangle_q -C . \]
\label{propo:unique}
\end{propo}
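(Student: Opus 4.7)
The plan is to prove that $F(x) := \langle u(x)\rangle_p - \langle f(x)\rangle_p$ is a constant function of $x$; calling this constant $C$ and using that the hypothesis rewrites as $F(x) + G(x) = 0$ with $G(x) := \langle v(x)\rangle_q - \langle g(x)\rangle_q$, one gets $G \equiv -C$, which is exactly the two claimed identities.

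The first step I would take is to exploit the polynomial nature of $f$ and $u$. Because $f, u \in \Z_p[x]$ (interpreted over $\Z$ as in the introduction), the values $\langle f(x)\rangle_p$ and $\langle u(x)\rangle_p$ are unchanged when $x$ is replaced by $x + p$, since $(x+p)^i \equiv x^i \pmod{p}$ for every $i \geq 0$. Hence $F(x)$ depends on $x$ only through $\langle x\rangle_p$. Symmetrically, $G(x)$ depends on $x$ only through $\langle x\rangle_q$.

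The second step invokes the coprimality assumption via the Chinese Remainder Theorem: for every pair $(a,b) \in \{0,\ldots,p-1\}\times\{0,\ldots,q-1\}$ there exists $x \in \Z$ with $\langle x\rangle_p = a$ and $\langle x\rangle_q = b$. Fixing $b$ and varying $a$ leaves $G(x)$ unchanged, hence leaves $F(x) = -G(x)$ unchanged as well; so $F$ does not depend on $\langle x\rangle_p$ either. Combined with the first step this forces $F$ to take a single value across all of $\Z$, which is the desired $C$.

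The main conceptual point, and where the hypothesis really does work, is that coprimality is used precisely to decouple the ``$p$-side'' and the ``$q$-side'' of the equation through CRT: if $\gcd(p,q) > 1$, one could trade mass between $\langle u\rangle_p$ and $\langle v\rangle_q$ in a non-constant fashion and the proposition would fail. Once this decoupling is recognised, the remainder of the argument is a one-line CRT observation and the rest is bookkeeping.
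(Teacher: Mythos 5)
Your proof is correct and follows essentially the same route as the paper: both arguments observe that the difference $\langle u(x)\rangle_p-\langle f(x)\rangle_p$ depends on $x$ only through $x \bmod p$ while being forced by the hypothesis to equal a quantity depending only on $x \bmod q$, and then use coprimality to conclude constancy. The only cosmetic difference is that the paper phrases the last step as ``periodic with periods $p$ and $q$, hence with period $\gcd(p,q)=1$,'' whereas you invoke the Chinese Remainder Theorem to decouple the two residues; these are interchangeable formulations of the same fact.
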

\begin{proof}
For each integer $x$, we have that 
\[ \langle f(x)\rangle_p - \langle u(x) \rangle_p = \langle v(x) \rangle_q -
\langle g(x) \rangle_q . \]
The function $\langle f(x) \rangle_p - \langle u(x) \rangle_p$, which
clearly is periodic with period $p$,
thus also is periodic with period $q$, and thus is periodic with
period gcd($p,q$)=1, that is, the function is constant.
\end{proof}
Since it must hold, for every $x$, that 
$0\le \langle f(x)\rangle_p, \langle
u(x)\rangle_p \le p-1$ and
$0\le \langle g(x)\rangle_q, \langle v(x)\rangle_q \le q-1$,
it follows that, for all $x$
\[ \max( -\langle f(x)\rangle_p, \langle g(x)\rangle_q - q +1) \leq
C\leq \min(p-1-\langle f(x)\rangle_p, \langle g(x)\rangle_q). \]
In particular, $C$ must be equal to $0$, and thus the decomposition of the
function $h$ must be unique, if there is an $x$ for which
$\langle f(x)\rangle_p=\langle g(x)\rangle_q=0$. 
That is the reason why we will suppose that $f(0)=g(0)=0$.  
Additionally, we are going to suppose $\gcd(p,q)=1$.  Under this
condition there exist integers $\mu_1$ and $\mu_2$ such that
$\mu_1 p + \mu_2 q = 1$. We want to mention now that if $p$ is 
much larger than $q$, then the MMO problem with known moduli can be easily transformed
in a \textit{noisy polynomial interpolation problem} (see~\cite{Igor-Arne}), 
where the evaluation of the polynomial $g$ modulo $q$
can be seen as random ``noise'' and the attacker tries to recover $f$. 
Rigorous bounds for the noise of the results in~\cite{Igor-Arne} depend heavily 
on the performance of finding a close vector in the lattice and it seems that 
there is some gap between the theoretic results and the practical experiments. 
For this paper, we focus on the case that $p$ and $q$ have approximately the same number of bits.

Without loss of generality, the expression of the polynomials $f,g$ is   
\begin{equation*}
  f(x) = \sum_{k=1}^{\alpha}r_{k}x^{k},
  \quad 
  g(x) = \sum_{k=1}^{\alpha}t_{k}x^{k},
\end{equation*}
where $r_k, t_k\in\Z$ and
$|r_k|<  p/2$, $|t_k|< q/2$ for $k=1,\ldots, \alpha$.
 
We will show that the  MMO problem is related to finding a short vector  in a
lattice. For that, we need the following definitions:

From  $x_1,\ldots,x_c$  we
build the Vandermonde matrix $\V$ of size $\alpha\times c$ as
\begin{equation*}
\V=
\begin{pmatrix}
 x_1 & x_2 & \cdots & x_c \\
 x_1^2 & x_2^2 & \cdots & x_c^2 \\
 \vdots & \vdots & \ddots & \vdots \\
 x_1^\alpha & x_2^\alpha & \cdots & x_c^\alpha 
\end{pmatrix}.
\end{equation*}
Also, for integer $x$ we denote by  $h(x) = 
\modulo{f(x)}{p}+\modulo{g(x)}{q}$. 
  The MMO problem can now be formulated as follows: 
given the vector $\vec{h}$ of which the components are the function
values  $\vec{h}= (h(x_1),\ldots, h(x_c))$, find
integer vectors $\vec{r},\vec{t}$ of length $\alpha$
such that $\|\vec{r}\|_{\infty}< p/2$,
$\|\vec{t}\|_{\infty}< q/2$ and 
\begin{equation*}
  \vec{h} = \modulo{\vec{r}\V}{p} + \modulo{\vec{t}\V}{q} 
  = \vec{r}\V -  p\lfloor \vec{r}\V/p \rfloor + \vec{t}\V - 
  q\lfloor \vec{t}\V/q \rfloor 
\end{equation*}
where all the  modulo and rounding operations act
component-wise. 

Using Lemma \ref{bound_infinity}, it is clear that the MMO problem can be
restated as follows:
given $\vec{h}$, find integer row vectors $\vec{r},\vec{t}$ of
length $\alpha$ and $\blambda_1,\blambda_2$ of length $c$ such that
\begin{equation}
\label{eq:lateq}
\vec{h} = \vec{r}\V-p \blambda_1 + 
\vec{t}\V-q \blambda_2,
\end{equation}
and 
\begin{equation}
\label{eq:latnorm1}
 \left|\left| \frac{\vec{r}\V}{p} -  \blambda_1 
- \frac{(p-1)\mathbf{e}_c}{2p} \right|\right|_\infty  < \frac{1}{2}
,\ 
 \left|\left| \frac{\vec{t}\V}{q} -  \blambda_2 
- \frac{(q-1)\mathbf{e}_c}{2q} \right|\right|_\infty
< \frac{1}{2}.
\end{equation}
The inequalities in~\eqref{eq:latnorm1} embody the constraints that
the vectors $\blambda_1,\blambda_2$ are 
the result of the rounding operation.

We concatenate the vectors $\vec{r}$, $\vec{t}$, $\blambda_1$ and
$\blambda_2$
vector $\vec{x}$ of length $2(c+\alpha)$:
$$ \vec{x} = (\vec{r},\vec{t},-\blambda_1,-\blambda_2)$$
and define a matrix $\vec{A}$ of size 
$2(c+\alpha) \times c$ as a vertical
concatenation of 
$2$ copies of $\vec{V}$ and
$2$ instances of the
$c\times c$ identity matrix $\mat{I}_c$ multiplied by $p,$ $q$
respectively:  
$$ \mat{A} = \begin{pmatrix}
 \V \\ \V \\ p \mat{I}_c  \\  q \mat{I}_c 
\end{pmatrix},$$
so that equation~(\ref{eq:lateq}) becomes
\begin{equation}
\label{eq:hAx}
 \vec{h} = \vec{x} \mat{A}
\end{equation}
Furthermore we define the  matrix $\mat{B}$ of size 
$2(c+\alpha) \times 2(c+\alpha) $ as the block matrix
\begin{equation*}
  \vec{B} = \begin{pmatrix}
    \mat{I}_\alpha/p & \mat{0}_{\alpha\times\alpha} & \V/p &
\mat{0}_{\alpha\times\alpha} \\
    \mat{0}_{\alpha\times\alpha} & \mat{I}_\alpha/q & \mat{0}_{\alpha\times c}
& \V/q \\
    \mat{0}_{c\times\alpha} & \mat{0}_{c\times\alpha} & \mat{I}_c
& \mat{0}_{c\times c} \\
    \mat{0}_{c\times\alpha} & \mat{0}_{c\times\alpha} & \mat{0}_{c\times c}
& \mat{I}_c
\end{pmatrix}
\end{equation*}
and the vector $\vec{u}$ of length $2(c+\alpha)$ as
$$ \vec{u} = (
\underbrace{0,\ldots,0}_{2\alpha},
\frac{p-1}{2p} \vec{e}_c, 
\frac{q-1}{2q} \vec{e}_c) .$$
Now the inequalities  \eqref{eq:latnorm1} and 
$\|\vec{r}\|_{\infty}< p/2$,
$\|\vec{t}\|_{\infty}< q/2$ 
 are equivalent to the single inequality
\begin{equation}
\label{eq:infnorm}
\left|\left| \vec{x} \mat{B} - \vec{u} \right|\right|_\infty
< \frac{1}{2} .
\end{equation}

So finding a solution to the MMO problem is equivalent to finding
an integer solution of equation~\eqref{eq:hAx} that satisfies
the constraint from  inequality~\eqref{eq:infnorm}.

Let $\vec{x}_0$ be an arbitrary integer
solution to equation~\eqref{eq:hAx}, for example we can take 
$\vec{x}_0= (\underbrace{0,\ldots,0}_{2\alpha},
\mu_1 \vec{h}, \mu_2 \vec{h})$. 
Every integer solution $\vec{x}$
of equation~\eqref{eq:hAx} can now be written as $\vec{x}=\vec{x}_0 +
\vec{y}$, where $\vec{y}\mat{A}=0$. 
Thus $\vec{y}$ lies in the left integer kernel of $\mat{A}$, which is spanned
by the rows of the matrix
\[  \mat{K} = \begin{pmatrix}
    \mat{I}_\alpha & -\mat{I}_\alpha & \mat{0}_{\alpha\times c}
& \mat{0}_{\alpha\times c} \\
\mat{0}_{\alpha\times\alpha} & \mat{I}_\alpha & -\mu_1\V & -\mu_2 \V \\
\mat{0}_{c\times\alpha} & \mat{0}_{c\times\alpha} & q \mat{I}_c &
-p \mat{I}_c \end{pmatrix},\]
so $\vec{y}=\vec{w}\mat{K}$ with $\vec{w}\in\Z^{2\alpha+c}$.
Substituting this into
equation~\eqref{eq:infnorm}, we obtain
\begin{equation*}
  \left|\left| \vec{w}\mat{K}\mat{B} - (\vec{u}-\vec{x}_0\vec{B})
    \right|\right|_\infty < \frac{1}{2}.
\end{equation*}
In other words, we are looking for vectors in the lattice $\cL$
 spanned by the rows of the matrix
\[ \mat{C} = \mat{K}\mat{B} = \begin{pmatrix}
\mat{I}_\alpha/p & -\mat{I}_\alpha/q & \V/p & -\V/q \\
\mat{0}_{\alpha\times\alpha} & \mat{I}_\alpha/q & -\mu_1\V &
\mu_1 p\V/q \\
\mat{0}_{c\times\alpha} & \mat{0}_{c\times\alpha} & q\mat{I}_c & 
-p\mat{I}_c\end{pmatrix} \]
that have  distance less than $1/2$ in infinity norm to the vector
$\vec{u}-\vec{x}_0\mat{B}$.

The main idea of the lattice reduction technique is to show that the close
vector is unique. Suppose we have two lattice vectors
$\vec{z}_1$ and $\vec{z}_2 \in\cL$  satisfying 
\begin{equation*}
  \left|\left| \vec{z}_i - (\vec{u}-\vec{x}_0\vec{B})
    \right|\right|_\infty < \frac{1}{2},\quad  i=1,2,
\end{equation*}
then $\vec{z}=
\vec{z}_1- \vec{z}_2 \in {\cL}$ and $ \left\|\vec{z}\right\|_\infty < 1$. 

Note that the fourth block column of $\mat{C}$ is equal to $-p/q$ times the
third. This implies that for each $\vec{z}\in\cL$, we have that
$\|\vec{z}\|_\infty = \|\vec{z}'\|_\infty$, where $\vec{z}'\in\Q^{2\alpha+c}$
is obtained from $\vec{z}$ by deleting the last block of $c$ coordinates
if $p<q$ and the third block if $q<p$.
Deleting the corresponding block column from $\mat{C}$ gives a square matrix
$\mat{C}'$; the $(2\alpha+c)$-dimensional
lattice of which the rows of $\mat{C}'$ are a basis is
denoted $\cL'$. Then
\[ \vol(\cL') = | \det(\mat{C}')| = \frac{ \max(p,q)^c }{(pq)^\alpha}. \]

The Gaussian heuristic suggests that a $d$-dimensional
lattice $\cL'$ with volume $\vol(\cL')$ is unlikely to have a nonzero
vector which is substantially shorter (in infinity norm)
 than $(1/2) \vol(\cL')^{1/d}$.
Thus, if $\vol(\cL')> 2^{2\alpha+c}$
it is likely that the close vector is
unique. When $p$ and $q$ have similar magnitude, we therefore conclude that 
if $c$ is somewhat larger than $2\alpha$, it is likely that the MMO problem can
be solved.

Conversely, with elementary methods we can show that if $p$ and $q$ have similar magnitude, then reconstruction of $(f,g)$ requires that 
on average, $c$ is at least $2\alpha$. Indeed, the number of pairs of polynomials $(f,g)$ equals
$(pq)^{\alpha}$; the number of sequences of  function values in $c$ integers equals $(p+q-1)^c$. Hence, if $(pq)^{\alpha} > 
(p+q-1)^c$, then there exists a sequence of function values that can be generated by more than one pair $(f,g)$ of polynomials.
The following proposition gives a slightly stronger result.
 \begin{propo}\label{ludo_bounds} If $p$ and $q$ have similar magnitude, then on average the minimum  
number of required  values  to compute the polynomials  $\modulo{f(X)}{p}$ and $\modulo{g(X)}{q}$
 is at least $2\alpha.$
 \end{propo}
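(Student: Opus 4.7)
The plan is to sharpen the counting argument sketched just before the proposition by examining the typical number of pre-images of the value map $\phi : (f,g) \mapsto (h(x_1),\ldots,h(x_c))$, rather than bounding its image crudely by $(p+q-1)^c$.

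First, I would fix points $x_1, \ldots, x_c$ so that the Vandermonde matrix $\V$ has rank $\alpha$ modulo both $p$ and $q$. The image $V_f \subseteq \{0,\ldots,p-1\}^c$ of $f \mapsto (\langle f(x_i)\rangle_p)_i$ then has size $p^\alpha$, and similarly $V_g$ has size $q^\alpha$; the map $\phi$ factors through componentwise integer addition $(a,b)\mapsto a+b$ on $V_f \times V_g$. For each target $v=(v_1,\ldots,v_c)$, the unrestricted decompositions $v_i = a_i + b_i$ with $a_i \in \{0,\ldots,p-1\}$, $b_i \in \{0,\ldots,q-1\}$ number $W(v) = \prod_i \min(v_i + 1,\, p+q-1-v_i,\, p,\, q)$, and satisfy $\sum_v W(v) = (pq)^c$.

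Second, assuming heuristically that $V_f$ and $V_g$ behave like generic subsets of $\Z_p^c$ and $\Z_q^c$ of densities $p^{\alpha-c}$ and $q^{\alpha-c}$, the expected number of pre-images of $v$ under $\phi$ is $N(v) \approx (pq)^{\alpha-c}\,W(v)$. Drawing $(f,g)$ uniformly, the resulting expected pre-image size of a random observation is
\[
E\bigl[N(\phi(f,g))\bigr] \;\approx\; (pq)^{\alpha-c}\,E\bigl[W(\vec{h})\bigr].
\]
When $p$ and $q$ have similar magnitude, the convolution $U_p + U_q$ (independent uniforms on $\{0,\ldots,p-1\}$ and $\{0,\ldots,q-1\}$) is concentrated on values $v_i$ with $W(v_i) = \Theta(\max(p,q))$, so $E[W(\vec{h})] = \Theta(\max(p,q)^c)$ and hence $E[N(\phi(f,g))] = \Theta(\max(p,q)^{2\alpha-c})$. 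If $\phi$ is to be injective on a typical input, this expectation must equal $1$, so the heuristic forces $c \geq 2\alpha$.

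The main obstacle is making the ``generic subset'' step rigorous. A completely rigorous variant replaces the heuristic by an information-theoretic bound: drawing $(f,g)$ uniformly gives $H(f,g) = \alpha \log_2(pq)$, and subadditivity of Shannon entropy yields
\[
\alpha \log_2(pq) \;=\; H(\phi(f,g)) \;\leq\; \sum_{i=1}^{c} H(h(x_i)) \;\leq\; c\,H(U_p + U_q),
\]
with $H(U_p + U_q) = \log_2 \max(p,q) + O(1)$ when $p$ and $q$ have similar magnitude (the triangular-type distribution has max-probability $1/\max(p,q)$ and support $p+q-1$, giving an entropy strictly less than $\log_2(p+q-1)$). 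Rearranging gives $c \geq \alpha \log_2(pq)/H(U_p+U_q) \to 2\alpha$ as $p,q \to \infty$, which is exactly the ``on average'' statement of the proposition.
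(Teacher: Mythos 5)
Your second, information-theoretic argument is essentially sound and takes a genuinely different route from the paper. The paper's proof is a second-moment computation: it defines $N(\mathbf{y})$ as the number of pairs $(f,g)$ producing the observation $\mathbf{y}$, notes $\sum_{\mathbf{y}}N(\mathbf{y})=(pq)^\alpha$, and applies the Cauchy--Schwarz inequality to get that the expected number of pairs consistent with a random observation satisfies ${\cal E}\ge (pq)^\alpha/(p+q-1)^c$, which exceeds $1$ when $c\le 2\alpha-1$ and $q=p(1+\epsilon)$ with $\epsilon$ small (and $p$ large relative to $\alpha$). Your entropy subadditivity bound $H(f,g)=H(\phi(f,g))\le \sum_i H(h(x_i))$ reaches the same threshold; what the paper's approach buys is a direct quantitative lower bound on the average number of colliding pairs (a sharper formalization of ``on average'' than non-injectivity or positive conditional entropy), while yours is arguably cleaner and avoids the sum over $\mathbf{y}$. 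Two points need patching. First, the step $H(h(x_i))\le H(U_p+U_q)$ is not justified: $\langle f(x_i)\rangle_p$ is uniform only when $x_i$ is a unit modulo $p$, and for general independent summands the uniform inputs need not maximize the entropy of the sum; the safe route is the trivial bound $H(h(x_i))\le \log_2(p+q-1)$, which is $\log_2\max(p,q)+O(1)$ in the regime $p\approx q$ and yields the same conclusion. Second, your first (``generic subset'') argument is explicitly heuristic and should be dropped from a proof; only the entropy half does the work. Note also that both your bound and the paper's give $c\ge 2\alpha$ only asymptotically (both silently require $\log p$ large compared to $\alpha$), so you are not losing anything relative to the paper on that front.
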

 \begin{proof}
Let $x_1,\ldots, x_c$ be integers.
For ${\bf y}\in Y=\{0,1,\ldots, p+q-2\}^c$, we define
\[ N({\bf y}) = \;\;  \mid \{ (f,g) \in \Z_p[x]\times \mathbb{Z}_q[x] \mid
  \mbox{deg}(f)\leq \alpha, \mbox{deg}(g)\leq \alpha, f(0)=g(0)=0 \mbox{ and } \]
\[ \langle f(x_i)\rangle_p + \langle g(x_i)\rangle_q = y_i \mbox{ for } 1\leq i\leq c\}\mid . \]
Of course, we have that 
\[ \sum_{\bf y\in Y} N({\bf y}) = p^{\alpha}q^{\alpha} . \]
We assume the polynomials $f$ and $g$ are chosen uniformly and independently.
Then the probability $p({\bf y})$ to observe ${\bf y}\in Y$ equals $N({\bf y})/(pq)^{\alpha}$. 
The expected number ${\cal E}$ of pairs of polynomials ($f,g$) matching ${\bf y}\in Y$ thus satisfies
\[ {\cal E} = \sum_{{\bf y}\in Y} N({\bf y})p({\bf y}) = \frac{1}{(pq)^{\alpha}} \sum_{{\bf y}\in Y} 
N({\bf y})^2 \geq
\frac{1}{(pq)^{\alpha}} \frac{\left(\sum_{{\bf y}\in Y}N({\bf y})\right)^2}{|Y|} = 
\frac{(pq)^{\alpha}}{(p+q-1)^c}, \]
where the inequality sign follows from the Cauchy-Schwarz inequality. \\
Consequently, if $c\leq 2\alpha-1$, then ${\cal E}\geq \frac{(pq)^{\alpha}}{(p+q-1)^{2\alpha-1}} \geq
(p+q)\left(\frac{pq}{(p+q)^2}\right)^{\alpha}$. And so, writing $q=p(1+\epsilon)$, we have that
${\cal E}\geq p(2+\epsilon)\left( \frac{1+\epsilon}{(2+\epsilon)^2}\right)^{\alpha}$.  

For sufficiently small $\epsilon$, we thus have that ${\cal E}>1$.  
 \end{proof}

In the next section we provide the details of the resulting algorithm and the performance of our Sage implementation.

\section{ The algorithm and its implementation}

The basic structure of the algorithm is the following:

\begin{algorithm}
  \caption{Algorithm to solve MMO problem}
  \begin{algorithmic}
    \REQUIRE Set $J$ and $p, q$
    \ENSURE  $ \langle f(X) \rangle_p$ and $ \langle g(X) \rangle_q$.
    \STATE Generate vectors, $\vec{h},\vec{x}_0,\vec{u}$ and matrices $\vec{B},\vec{K},\vec{C}$
    as defined in Section 2.
    \STATE Use a Closest Vector algorithm to find $\vec{x}'$.
    \RETURN the polynomials with coefficients equal to the first $2\alpha$
    components of vector $\vec{x}'$
  \end{algorithmic}
\end{algorithm}

This is the pseudocode of the algorithm we have used to compute a
close vector which is called the  \textit{Babai Nearest Plane Algorithm}, see~\cite{Babai86}: 

\begin{algorithm}
    \caption{Babai Nearest Plane algorithm}
  \begin{algorithmic}
    \REQUIRE Basis given as a matrix $\vec{B}$, $\vec{t}$ 
    \ENSURE A
    vector $\vec{u}\in\cL(\vec{B})$, such that
    $\|\vec{u}-\vec{t}\|_2\le 2^{d/2}\min\{\|\vec{v}-\vec{t}\|_2\;|\;
    \vec{v}\in\cL(B)\}$ 
    \STATE Run LLL algorithm on matrix $\vec{B}$
    with standard $\epsilon = 3/4$ 
    \STATE $\vec{b} = \vec{t}$ 
    \FOR{j from $n$ to $1$} 
    \STATE $c_j = \lceil\frac{\vec{b}\overline{
        \vec{b}_j}}{\|\overline{\vec{b}_j}\|_2^2} \rceil$ 
    \STATE $\vec{b} = \vec{b} - c_j \vec{b}_{j}$
    \ENDFOR
    \RETURN $\vec{b}-\vec{t}$
  \end{algorithmic}
\end{algorithm}
We have implemented our algorithm for solving the MMO problem in the
Sage system, including the Babai algorithm.  

Babai Nearest Plane algorithm finds a close vector with
respect to the Euclidean norm.
The closest vector with respect to the
infinity norm can be found doing the following computations:
\begin{itemize}
\item Calculate a $LLL$-reduced basis $\vec{a}_1,\ldots, \vec{a}_d$.
\item Calculate a close vector $\vec{b}$ using the Babai Nearest Plane algorithm.
\item Take the  vector $\vec{b}'$  that minimizes
  $\|\vec{t}-\vec{b}'\|_{\infty}$ where $\vec{b}'$ belongs to the
  following set,
  \begin{equation*}
    \{\vec{b}'\;|\; \vec{b}' = \vec{b}+\sum_{i=1}^{d}C_i\vec{a}_i,\;
     |C_i| \le  \sqrt{d} 2^{(i-1)/2},\;
    i = 1,\ldots, d
    \}.
  \end{equation*}
\end{itemize}
The fact that this returns the closest vector with respect to the
infinity norm comes from~\cite[Proposition 1.6]{LLL82} and  the proof
of~\cite[Proposition 1.11]{LLL82}. 

To test when
the algorithm to solve MMO works, we use an indirect 
method. We take the lattice defined by the rows of  $\vec{C}$
and check for the shortest vector. If this vector has norm bigger than
$1$, then we know that the algorithm will work and in other case, we
suppose that it fails.

In this way, we will count as fails many cases where the algorithm
could possibly work. However, implementations show that, even in these
conditions, the algorithm for solving MMO seems to work in most of the
cases.  To be more precise, selecting uniformly at random  
$c=2\alpha$ values  $x_i \in  [1, p]$ the algorithm was successful 
in $100\%$ of the cases with $200$-bit number $p$. This 
confirms that $c=2\alpha$ is indeed the natural threshold 
for the algorithm.

However the  perfomance changes  if  the values are selected from a
small  interval  $[1, p^{1/K}]$ for big $K$. If $K$ is smaller than
$\alpha$ then it is possible to recover some of the coefficients of
the polynomials. More precisely, the algorithm recovers the
coefficients of the polynomials of the monomials of degree greater
than $K$.

This fact is interesting because of the design of the HIMMO  key
generation system~\cite{MorchonTGG12} and it is analyzed in detail in
next section.

\section{Restriction to small arguments}

In Proposition~\ref{propo:unique} we showed that $f$ and $g$ are determined up
to a constant if $h(x)=\langle f(x)\rangle_p + \langle g(x)\rangle_q$ for all
$x\in\Z$. This constant can be fixed by setting $f(0)=g(0)=0$.
However, in cryptographic applications, values of $x$
that can be used are from a smaller interval: $0\leq x < w$, where
$w\approx(\min(p,q))^{1/K}$ for some $K\geq 1$. If we are interested only in 
the function $h$ on this short interval, then the reconstruction is typically
far from
unique. In fact, let $C\in\Q[x]$ be a polynomial of degree at
most $K$ that takes integer values for all integer arguments, i.e.,
$C$ is an integer linear combination of binomial coefficients:
\[ C(x) = \sum_{k=0}^K C_k \binom{x}{k},\quad C_0,\ldots,C_K\in\Z.\]
If $\gcd(p,K!)=\gcd(q,K!)=1$, the factorials $2!, 3!,\ldots,K!$ have inverses modulo $p$ and modulo $q$, so we can define polynomials
$c_p\in\Z_p[x]$ and $c_q\in\Z_q[x]$ of degree at most $K$,
such that for all integer $x$:
\[ \langle c_p(x)\rangle_p = \langle C(x)\rangle_p \text{ and }
\langle c_q(x) \rangle_q = \langle C(x) \rangle_q. \]

If it holds that $C$ is small on $[0,w)$, in the sense that
\[ 0 \leq \langle f(x)\rangle_p + C(x) \leq p-1 \text{ and }
0 \leq \langle g(x)\rangle_q - C(x) \leq q-1 \text { for all integer }
x\in[0,w),\]
then $f+c_p$ and $g-c_q$ decompose $h$ on $[0,w)$.

If all short lattice vectors correspond to such pairs $(c_p,-c_q)$, then all
lattice points close to our target vector correspond to polynomials
$(\tilde{f},\tilde{g})$ that also decompose $h$. In other words: though
we cannot reconstruct $f$ and $g$, we can interpolate $h$ correctly.

Note that our previous analysis based on lattice volumes and the Gaussian
heuristic failed to see the short vectors that correspond to the polynomials
$C(x)$. This should not be surprising: the lattice volume is independent of the
values $x_1,\ldots,x_c$, and these short vectors appear only if $0\leq x_i <w$
for $i=1,2,\ldots,c$.
The numerical experiments show that the Gaussian heuristic is not valid for $\cL'$ when the $x_i$ are
from an interval that is much shorter than $p$ and $q$.

Above, we found a sublattice of $\cL'$ with short basis vectors. One may
wonder if there are short vectors in $\cL'$ that are not in the sublattice
generated by these short vectors.
To answer this question, we apply the Gaussian heuristic to the lattice that is
obtained when the sublattice is projected out as in Section~6.1 of
\cite{GarciaRietmanShparlinskiTolhuizenpreprint2014}.
Lemma~5 on page 29 of~\cite{BookLLL} gives the explicit formula for the volume
of a lattice resulting as the orthogonal projection over a linear subspace.  We
write it here for the convenience  of the reader.
\begin{Lm}
  Let $L$ be a $d$-dimensional lattice in $\R^s$ and $M$ be a $r$-dimensional sublattice of L which the property that 
  one of its basis can be extended to a basis of $L$.
  Let $\pi_M$ denote the orthogonal projection over the orthogonal suplement of the linear span of $M$.
  Then the image of $L$ by $\pi_M$ is a $(d-r)$-dimensional lattice of $\R^s$ and volume 
  $\vol{(L)}/\vol{(M)}$.
\end{Lm}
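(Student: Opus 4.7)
The plan is to reduce the statement to a direct computation via Gram–Schmidt orthogonalization applied to a carefully chosen basis of $L$. The starting point is the hypothesis that some basis $\vec{b}_1,\ldots,\vec{b}_r$ of $M$ can be extended to a basis $\vec{b}_1,\ldots,\vec{b}_d$ of $L$. I would first record the trivial linear–algebraic consequence: under $\pi_M$ the vectors $\vec{b}_1,\ldots,\vec{b}_r$ are sent to $\vec{0}$ (they lie in the linear span of $M$), while $\pi_M(\vec{b}_{r+1}),\ldots,\pi_M(\vec{b}_d)$ lie in the orthogonal complement $(\mathrm{span}\, M)^\perp$.

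Next I would show that $\pi_M(\vec{b}_{r+1}),\ldots,\pi_M(\vec{b}_d)$ are $\Z$-linearly independent and generate $\pi_M(L)$ as a free abelian group, so that $\pi_M(L)$ is a lattice of dimension $d-r$. Generation is immediate: any $\vec{v}\in L$ is an integer combination of $\vec{b}_1,\ldots,\vec{b}_d$, and the first $r$ terms vanish after applying $\pi_M$. Independence follows because if a $\Z$-combination $\sum_{i>r} c_i\,\pi_M(\vec{b}_i)=\vec{0}$, then $\sum_{i>r} c_i\vec{b}_i\in\mathrm{span}\, M$; intersecting with $L$ and using the basis property gives $c_i=0$ for $i>r$.

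The heart of the proof is the volume identity. Let $\vec{b}_1^*,\ldots,\vec{b}_d^*$ be the Gram–Schmidt orthogonalization of $\vec{b}_1,\ldots,\vec{b}_d$. Then $\vol(L)=\prod_{i=1}^d\|\vec{b}_i^*\|_2$ and $\vol(M)=\prod_{i=1}^r\|\vec{b}_i^*\|_2$. The key observation is that for $i>r$, the vectors $\vec{b}_i^*$ are already orthogonal to $\mathrm{span}\, M=\mathrm{span}(\vec{b}_1^*,\ldots,\vec{b}_r^*)$, so $\pi_M(\vec{b}_i)=\vec{b}_i^*+\sum_{j=r+1}^{i-1}\mu_{ij}\vec{b}_j^*$ for suitable $\mu_{ij}$. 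Consequently $\vec{b}_{r+1}^*,\ldots,\vec{b}_d^*$ \emph{is} the Gram–Schmidt orthogonalization of the basis $\pi_M(\vec{b}_{r+1}),\ldots,\pi_M(\vec{b}_d)$ of $\pi_M(L)$, so
\[
  \vol\bigl(\pi_M(L)\bigr)=\prod_{i=r+1}^d \|\vec{b}_i^*\|_2=\frac{\vol(L)}{\vol(M)}.
\]

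The only genuine subtlety—and thus the main obstacle—is verifying that the projected vectors $\pi_M(\vec{b}_{r+1}),\ldots,\pi_M(\vec{b}_d)$ really do have $\vec{b}_{r+1}^*,\ldots,\vec{b}_d^*$ as their Gram–Schmidt sequence. This is where the hypothesis that a basis of $M$ extends to a basis of $L$ is used; without it, the projected images need not form a lattice at all (they could be dense) and the neat telescoping of Gram–Schmidt norms would fail. Once this point is nailed down, the rest of the argument is bookkeeping.
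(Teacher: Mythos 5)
Your proof is correct, and it is worth noting that the paper itself gives no proof of this lemma: it is quoted from \cite{BookLLL} (Lemma~5, p.~29) ``for the convenience of the reader,'' so there is no in-paper argument to compare against. Your Gram--Schmidt route is the standard way to establish the result, and the key computation --- that $\pi_M(\vec{b}_i)=\vec{b}_i^*+\sum_{j=r+1}^{i-1}\mu_{ij}\vec{b}_j^*$ for $i>r$, so that $\vec{b}_{r+1}^*,\ldots,\vec{b}_d^*$ is precisely the Gram--Schmidt sequence of the projected basis and the volume telescopes to $\prod_{i=r+1}^d\|\vec{b}_i^*\|_2=\vol(L)/\vol(M)$ --- is sound. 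Two small remarks. First, the independence step is even easier than you make it: if $\sum_{i>r}c_i\vec{b}_i\in\mathrm{span}(M)=\mathrm{span}(\vec{b}_1,\ldots,\vec{b}_r)$, then $\R$-linear independence of $\vec{b}_1,\ldots,\vec{b}_d$ already forces $c_i=0$; no intersection with $L$ is needed. Second, your closing comment slightly overstates what the extension hypothesis buys: for \emph{any} sublattice $M\subseteq L$ the span of $M$ is generated by lattice vectors, so $\pi_M(L)$ is automatically discrete and can never be dense. What genuinely fails without the hypothesis is the volume formula, which in general reads $\vol(L)/\vol(L\cap\mathrm{span}(M))$ and differs from $\vol(L)/\vol(M)$ by the index $[L\cap\mathrm{span}(M):M]$; the extension hypothesis is exactly the condition that this index equals one. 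With those two points tidied up, your argument is a complete and correct proof of the cited lemma.
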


Assuming $p<q$ and $K>\alpha$, the volume of the resulting lattice equals
\[ q^{c-\alpha-1}/\sqrt{\det(B B^t)},\]
where $B$ is the matrix
\[ B  =  \begin{pmatrix}
1 & 1 & \cdots & 1 \\
x_1 & x_2 & \cdots & x_c \\
\binom{x_1}{2} & \binom{x_2}{2} & \cdots &\binom{x_c}{2} \\
\vdots & \vdots & & \vdots \\
\binom{x_1}{\alpha} & \binom{x_2}{\alpha} & \cdots & \binom{x_c}{\alpha} 
\end{pmatrix}.\]
When $x_1,\ldots,x_c$ are uniformly drawn from $[0,w)$, $\sqrt{\det(B B^t)}$
will be of order $w^{\alpha(\alpha+1)/2}$. Comparing powers of $w$, the
resulting volume is therefore expected to be much larger than~1 if
$ K (c-\alpha) > \alpha(\alpha+1)/2.$

\textbf{Example}.  Let $\alpha=6$, $w=2^{16}$, and
\begin{align*}
p&=322503631145131659181549502994177879533 \\
q&=322503631145131659181549502996361408083
\end{align*}
so that $p\approx q\approx w^8$, and $K=8$.

Suppose the coefficients of $f\in\Z_p[x]$ and $g\in\Z_q[x]$ are equal to
\begin{align*}
f_0 &= 192299855391930388766069561100536978455 \\
f_1 &= 80324299466086676640269450973128212279 \\
f_2 &= 134802655995538131612821059755185358806 \\
f_3 &= 223036273860653058471857675170774765711 \\
f_4 &= 81615146624468266057406642183853219751 \\
f_5 &= 282812473825451509017913772106035640705 \\
f_6 &= 278906905917307720980382059680001096297
\end{align*}
and
\begin{align*}
g_0&= 81564018199971421800339434244552477506  \\
g_1 &= 12324696623153181384549093381069011068  \\
g_2 &= 80030936209387920933656861269029654371 \\ 
g_3 &=315635911037272927490950126509525457405 \\
g_4 &= 217950416300798270685940703747161570332 \\
g_5 &= 75454198535432609870859677101539890163 \\ 
g_6 &= 26892964982895845277700750286746366172 .
\end{align*}
In this example the smallest value of $c$ for which $K (c-\alpha-1) >
\alpha(\alpha+1)/2$ is $10$. That means that if we pick $c=10$ points uniformly
from~$[0,w)$, there is a fair chance that the volume of the projected lattice is much larger than~1.

We are given the values of $h(x)=\langle f(x)\rangle_p + \langle g(x)\rangle_q$
in $c=10$ points randomly chosen form the interval $[0,w)$
according to the following table.\\
\begin{center}
\begin{tabular}{|r|r|r|}
$i$ & $x_i$ & $h(x_i)$ \\
\hline
1 & 34915 & 357083778061836956769804023406098677550 \\
2 & 30844 & 501434122478371565756095361502998185705 \\
3 & 55453 & 362669734592545590446623074678041228580 \\
4 & 43386 & 453528102619044436291771088280150310990 \\
5 & 61725 & 409617140945520234057946967178875528708 \\
6 & 39144 & 426802401636630448727954157743588116409 \\
7 & 14608 & 311556461063783252602939114845129657070 \\
8 & 24287 & 594980681560119885662989234834546277705 \\
9 & 24582 & 119430230752341918846040173886171897211 \\
10& 36432 & 20159634491993343981036574887019110187
\end{tabular}
\end{center}

Constructing the lattice as described before, including an additional row of
ones in the matrix $\V$ in order to take the constant terms of the polynomials
into account, we find a lattice vector that is close to the target vector. The
polynomial coefficients corresponding to this nearby lattice vector are
\begin{align*}
\tilde{f}_0 &= 136931826884319377850275846232659046764 \\
\tilde{f}_1 &= 127274522470810992144873423947517028220 \\
\tilde{f}_2 &= 166540029496138250784732903087691991725 \\
\tilde{f}_3 &= 149982375974823828230059543913714128152 \\
\tilde{f}_4 &= 157228597180650695773338976918720767558 \\
\tilde{f}_5 &= 159036774649843108350794952315211705687 \\
\tilde{f}_6 &= 151078581747150708184414679388065540292 
\end{align*}
and
\begin{align*}
\tilde{g}_0 &= 136932046707582432716133148636421185297 \\
\tilde{g}_1 &= 126626289190994695470719871904637347436 \\
\tilde{g}_2 &= 155794773090498354822261518935095270543 \\
\tilde{g}_3 &= 169208171060443111900860401561223641003 \\
\tilde{g}_4 &= 146816182843853780680863223220064874839 \\
\tilde{g}_5 &= 149958509619423673718575100602091084672 \\
\tilde{g}_6 &= 150242072053814918362813276371264593533 .
\end{align*}

The difference $\tilde{h}(x)-h(x)$ is plotted in Figure~\ref{fig:interpolation}.
This shows that, even though the interpolation is not perfect, it still
predicts the correct value in a sizable fraction of the points and the error
pattern does not look random.

\begin{figure}[H]
\includegraphics[width=.9\textwidth]{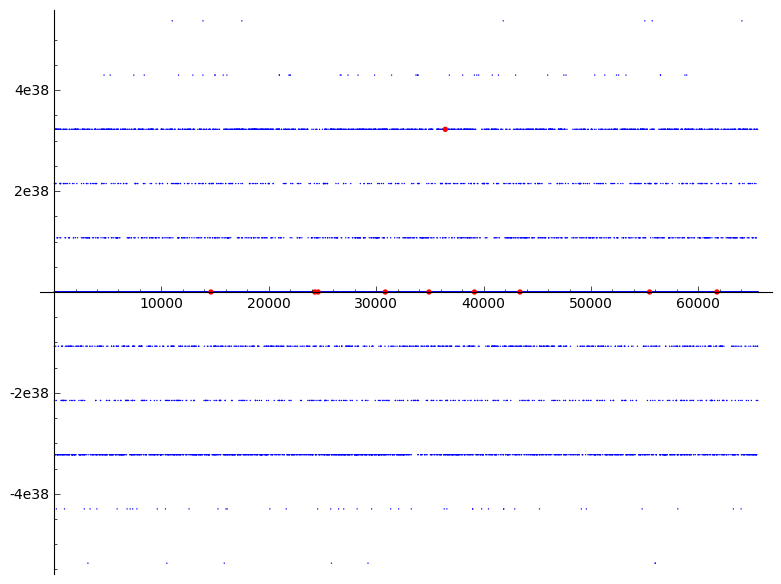}
\caption{Graph of $\tilde{h}(x)-h(x)$. The reconstucted function $\tilde{h}(x)$
fits the observation perfectly in  9 out of the 10 points, but more
interestingly, the interpolation error is zero in many other points, even
though $c < 2\alpha$. If the error is non-zero, it is restricted to very narrow
bands.}
\label{fig:interpolation}
\end{figure}

\section{Conclusions}
We have introduced the MMO problem. It seems infeasible to solve the MMO problem for unknown moduli. We have shown the equivalence of the MMO problem to finding close vectors in a lattice. If all observed function arguments lie in an interval that is much shorter than the moduli, then reconstruction of the unknown polynomials is infeasible; however, the computed polynomials often gives correct interpolation of the function on that short interval.

The MMO problem can readily be generalized to more than two moduli. Furthermore, an additional modular operation may be performed on the sum of the polynomial evaluations, see \cite{MorchonTGG12}.

\bibliographystyle{IEEEtran}

\bibliography{references}

\end{document}